\theoremstyle{plain}
\newtheorem{theorem}{Theorem}[section]
\newtheorem{proposition}[theorem]{Proposition}
\newtheorem{lemma}[theorem]{Lemma}
\newtheorem{question}[theorem]{Question}
\theoremstyle{definition}
\newtheorem*{definition}{Definition}
\newtheorem{example}[theorem]{Example}
\theoremstyle{remark}
\begin{document}

\title[A weaker notion of the finite factorization property]
{A weaker notion of the finite factorization property}
\author[H. Jiang]{Henry Jiang}
\address{Henry Jiang \\ Detroit Country Day School \\ Beverly Hills, MI 48025}
\email{jiangstem@gmail.com}

\author[S. Kanungo]{Shihan Kanungo}
\address{Shihan Kanungo \\ Henry M. Gunn School \\ Palo Alto, CA 94306}
\email{shihankanungo@gmail.com}
	
\author[H. Kim]{Hwisoo Kim}
\address{Hwisoo Kim \\ Phillips Academy \\ Andover, MA 01810}
\email{hkim25@andover.edu}

\subjclass[2020]{Primary: 11Y05, 20M13; Secondary: 06F05, 20M14}
\keywords{length-finite factorization, positive monoid, positive semiring, finite factorization, bounded factorization, length-factoriality}

\begin{abstract}
An (additive) commutative monoid is called atomic if every given non-invertible element can be written as a sum of atoms (i.e., irreducible elements), in which case, such a sum is called a factorization of the given element. The number of atoms (counting repetitions) in the corresponding sum is called the length of the factorization. Following Geroldinger and Zhong, we say that an atomic monoid $M$ is a length-finite factorization monoid if each $b \in M$ has only finitely many factorizations of any prescribed length. An additive submonoid of $\mathbb{R}_{\ge 0}$ is called a positive monoid. Factorizations in positive monoids have been actively studied in recent years. The main purpose of this paper is to give a better understanding of the non-unique factorization phenomenon in positive monoids through the lens of the length-finite factorization property. To do so, we identify a large class of positive monoids which satisfy the length-finite factorization property. Then we compare the length-finite factorization property to the bounded and the finite factorization properties, which are two properties that have been systematically investigated for more than thirty years.
\end{abstract}

\maketitle

\section{Introduction}
\label{sec:intro}

Following Cohn~\cite{pC68}, we say that a commutative monoid is atomic provided that every non-invertible element can be written as a product of atoms (also called irreducibles), while an integral domain is called atomic provided that its multiplicative monoid is atomic. Following Anderson, Anderson, and Zafrullah~\cite{AAZ90}, we say that a monoid/domain satisfy the finite factorization property if every nonzero nonunit element has only finitely many factorizations (into atoms). A weaker version of the finite factorization property, also introduced in~\cite{AAZ90}, is the bounded factorization property: for each nonzero nonunit element there exists a bound for the number of atoms (counting repetitions) in any factorization of such an element. The bounded and the finite factorization properties have received a great deal of attention since they were introduced back in 1990 (see the recent paper~\cite{AG22} by Anderson and Gotti for a survey on bounded and finite factorization domains).
\smallskip

A commutative monoid is called factorial if every nonunit element has a unique factorization. Factorial monoids are, therefore, atomic. An atomic monoid is called half-factorial if any two factorizations of the same element have the same number of atoms (counting repetitions). The half-factorial property was coined by Zaks~\cite{aZ76} in the context of integral domains, and the same property has been systematically studied since then: see the recent paper~\cite{GLTZ21} by Gao et al. (and references therein) and  see the paper~\cite{CC00} by Chapman and Coykendall for a survey comprising the most relevant studies of half-factoriality until 2000). The following complementary notion of half-factoriality has been considered more recently: a monoid is called length-factorial if any two distinct factorizations of the same element have distinct lengths (it is clear that a monoid is factorial if and only if it is both half-factorial and length-factorial). The notion of length-factoriality was introduced by Coykendall and Smith in~\cite{CS11}, and it has been recently investigated by several authors (see, for instance,~\cite{CCGS21,GZ21,BVZ23}).
\smallskip

We say that the length-factorial property complements the half-factorial property because the former is precisely what a half-factorial monoid needs to be factorial. Similarly, we can define a property that complements the bounded factorization property with respect to the finite factorization property. An atomic monoid is said to satisfy the length-finite factorization property if each element has only finitely many factorizations with any prescribed number of atoms (counting repetitions). Observe that a monoid satisfies the finite factorization property if and only if it satisfies both the bounded and the length-finite factorization properties. The length-finite factorization property was recently introduced by Geroldinger and Zhong in~\cite{GZ21}. The main purpose of this paper is to offer a better understanding of factorizations in positive monoids and positive semirings using the light shed by the length-finite factorization property. While doing so, we provide the first dedicated study of the length-finite factorization property.
\smallskip

A positive monoid is an additive submonoid of $\mathbb{R}$ consisting of nonnegative real numbers. On the other hand, a positive semiring is a positive monoid that contains $1$ and is closed under multiplication. The atomic structure and the arithmetic of factorizations of both positive monoids and positive semirings have been actively studied in recent years. For instance, atomicity and factorizations in positive monoids have been studied by Chapman et al. in~\cite{fG19,CG21a,CG22} and more recently by Gotti and Vulakh in~\cite{GV23}. On the other hand, atomicity and factorizations in positive semirings have been studied by Baeth et al. in~\cite{BG20,BCG21,sZ22} and more recently by Gotti and Polo in~\cite{GP23,GP23a}. Factorizations in positive semirings consisting of rationals have also been considered in recent literature by various authors (see, for instance, \cite{CGG20a,JLZ23}).
\smallskip

In Section~\ref{sec:background}, we briefly discuss the notation, terminology, and main known results we will use throughout the rest of the paper.
\smallskip

Section~\ref{sec:LFF positive monoids}, which is the first section of content, is devoted to study the length-finite factorization property in the setting of positive monoids. The main result we establish in Section~\ref{sec:LFF positive monoids} is Theorem~\ref{thm:main result}, where we identify a large class of positive monoids satisfying the length-finite factorization property. In the second part of the same section, we discuss a few examples connecting the length-finite factorization property with some well-studied atomic properties. It is known that every commutative monoid satisfying the ascending chain on principal ideals (ACCP) is atomic  \cite[Proposition~1.1.4]{GH06}, and it is also known that every monoid that satisfies the bounded factorization property also satisfies the ACCP \cite[Corollary~1]{fHK92}. We provide in Section~\ref{sec:LFF positive monoids} an example of a positive monoid that satisfies the length-finite factorization property but not the ACCP and an example of a positive monoid that satisfies the bounded factorization property but not the length-finite factorization property.
\smallskip

In Section~\ref{sec:LFF positive semirngs}, we consider the length-finite factorization property in the setting of positive semirings. Although the main purpose of Section~\ref{sec:LFF positive semirngs} is to discuss examples of positive semirings in connection with the length-finite factorization property, we also discuss a method to construct positive semirings out of positive monoids using certain exponentiation construction. In this direction, we prove that the ACCP, the bounded factorization property, and the finite factorization property are all preserved under the mentioned exponentiation construction. Additionally, we show exponentiation construction is in fact isomorphic to the monoid semiring over the natural numbers.
It is still unknown to the authors whether the length-finite factorization property is preserved under the same exponentiation construction, and we pose this as Question~\ref{q:does LFF ascend to E(M)}.

\section{Background}
\label{sec:background}
\subsection{General Notation} 

Following standard notation, we let $\mathbb{Z}$, $\mathbb{Q}$, and $\mathbb{R}$ denote the set of integers, rational numbers, and real numbers, respectively. Also, we let $\mathbb{N}$, $\mathbb{N}_0$, and~$\mathbb{P}$ denote the set of positive integers, nonnegative integers, and primes, respectively. 
For $b,c \in \mathbb{Z}$ with $b \le c$, we let $[\![ b,c ]\!]$ denote the set of integers between~$b$ and~$c$, i.e.,
\[
	[\![ b,c ]\!] = \{n \in \mathbb{Z} \mid b \le n \le c\}.
\]
For $S \subseteq \mathbb{R}$ and $r \in \mathbb{R}$, we set $S_{\ge r} = \{s \in S \mid s \ge r\}$ and $S_{> r} = \{s \in S \mid s > r\}$.

\subsection{Commutative Monoids} 

Throughout this paper, we reserve the term {\it monoid} for a cancellative and commutative semigroup with an identity element. Monoids here are written additively unless we specify otherwise. Let $M$ be a monoid. We let $M^\bullet$ denote the set of nonzero elements of $M$, while we let $\mathcal{U}(M)$ denote the group of invertible elements of $M$. The set $M/\mathcal{U}(M) = \{b + \mathcal{U}(M) \mid b \in M\}$ is a monoid under the natural operation induced by the operation of $M$. We say that $M$ is {\it reduced} provided that $\mathcal{U}(M)$ is the trivial group, in which case we can canonically identify $M/\mathcal{U}(M)$ with $M$. A {\it submonoid} of~$M$ is a subset of $M$ that is closed under addition and contains $0$. The monoids we are mostly concerned with in the scope of this paper are positive monoids, which are reduced. A {\it positive monoid} is an additive submonoid of $\mathbb{R}_{\ge 0}$. If $S$ is a subset of $M$, then $\langle S \rangle$ stands for the smallest (under inclusion) submonoid of $M$ that contains $S$. If $M = \langle S \rangle$ for some finite subset $S$, then we say that $M$ is {\it finitely generated}. Additive submonoids of $\mathbb{N}_0$, called {\it numerical monoids}, are always finitely generated. A map $\varphi \colon M \to M'$, where $M'$ is a monoid, is called a ({\it monoid}) {\it homomorphism} if $\varphi(b+c) = \varphi(b) + \varphi(c)$ for all $b,c \in M$.
\smallskip

An element $a \in M \setminus \mathcal{U}(M)$ is called an {\it atom} if for all $b,c \in M$ the fact that $a = b+c$ guarantees that $b \in \mathcal{U}(M)$ or $c \in \mathcal{U}(M)$. We let $\mathcal{A}(M)$ denote the set of all the atoms of $M$. Following~\cite{CDM99}, we say that $M$ is {\it antimatter} if $\mathcal{A}(M)$ is empty. An element $b \in M$ is called {\it atomic} if either $b \in \mathcal{U}(M)$ or if $b$ can be written as a sum of atoms (repetitions are allowed). Following~\cite{pC68}, we say that $M$ is {\it atomic} if every element of $M$ is atomic. Atomic monoids play an important role in this paper. A subset $I$ of $M$ is called an {\it ideal} if $I + M \subseteq I$. An ideal $I$ of $M$ is called {\it principal} if $I = b + M$ for some $b \in M$. A sequence of ideals $(I_n)_{n \ge 1}$ of $M$ is called an {\it ascending chain} if $I_n \subseteq I_{n+1}$ for every $n \in \mathbb{N}$. The monoid $M$ is said to satisfy the {\it ascending chain condition on principal ideals} (ACCP) if every ascending chain of principal ideals $(I_n)_{n \ge 1}$ stabilizes, which means that there exists $N \in \mathbb{N}$ such that $I_n = I_N$ for every $n \ge N$.  If a monoid satisfies the ACCP, then it is atomic \cite[Proposition~1.1.4]{GH06}. The converse does not hold (see, for instance, Example~\ref{ex:LFFM not ACCP}).  In addition, finitely generated monoids satisfy the ACCP (see, for instance, \cite[Proposition~2.7.8]{GH06}), and so they are atomic.

\subsection{Factorizations and Lengths}

Let $\mathsf{Z}(M)$ denote the monoid consisting of all formal sums of atoms in $\mathcal{A}(M/\mathcal{U}(M))$; that is, $\mathsf{Z}(M)$ is the free commutative monoid on $\mathcal{A}(M/\mathcal{U}(M))$. The elements of $\mathsf{Z}(M)$ are called {\it factorizations} in $M$. Let the function $\pi \colon \mathsf{Z}(M) \to M/\mathcal{U}(M)$ be the only monoid homomorphism such that $\pi(a) = a$ for all $a \in \mathcal{A}(M/\mathcal{U}(M))$. Then for each $b \in M$, we set
\[
	\mathsf{Z}(b) := \{z \in \mathsf{Z}(M) \mid \pi(z) = b + \mathcal{U}(M)\},
\]
and call the element of $\mathsf{Z}(b)$ {\it factorizations of} $b$. Observe that $M$ is atomic if and only if $\mathsf{Z}(b)$ is nonempty for every $b \in M$. The monoid $M$ is called a {\it unique factorization monoid} (UFM) if $|\mathsf{Z}(b)| = 1$ for every $b \in M$, while $M$ is called a {\it finite factorization monoid} (FFM) if $1 \le |\mathsf{Z}(b)| < \infty$ for every $b \in M$. It follows from the definitions that every UFM is an FFM. Also, it is well known that every finitely generated monoid is an FFM \cite[Proposition~2.7.8]{GH06}.

If $z := a_1 \cdots a_\ell \in \mathsf{Z}(M)$ for some $a_1, \dots, a_\ell \in \mathcal{A}(M/\mathcal{U}(M))$, then we call $\ell$ the {\it length} of $z$, and we often denote $\ell$ by $|z|$. For $b \in M$, the set
\[
	\mathsf{L}(b) := \{|z| \mid z \in \mathsf{Z}(b)\}
\]
is called the {\it set of lengths} of~$b$. We say that $M$ is a {\it bounded factorization monoid} (BFM) if $1 \le |\mathsf{L}(b)| < \infty$ for every $b \in M$. It follows from the definitions that every FFM is a BFM. Also, it is well known that every BFM satisfies the ACCP \cite[Corollary~1]{fHK92}. For each $b \in M$ and $\ell \in \mathbb{N}$, set
\[
	\mathsf{Z}_\ell(b):=\{z \in \mathsf{Z}(b) \mid |z| = \ell \}.
\]
Following~\cite{GZ21}, we say that $M$ is a {\it length-finite factorization monoid} (LFFM) if $M$ is atomic and $|\mathsf{Z}_\ell(b)| < \infty$ for all $b \in M$ and $\ell \in \mathbb{N}$. From the definitions, we obtain that every FFM is an LFFM. As we shall see in the next section, the notions of a BFM and an LFFM are not comparable.

\subsection{Commutative Semirings}

A nonempty set $S$ endowed with two binary operations `$+$' and `$\cdot$' is called a {\it commutative semiring}, or simply a {\it semiring}\footnote{Although in the scope of this paper we are only interested in semirings that are cancellative and commutative with respect to both operations, it is worth emphasizing that a more general and standard definition of a semiring does not assume any of these conditions (see~\cite{jG99}).} in the scope of this paper, if the following three conditions are satisfied:
\begin{enumerate}
	\item $(S,+)$ is a monoid with its identity element denoted by $0$;
	\smallskip
	
	\item $(S^\bullet, \cdot)$ is a monoid with identity element denoted by $1$;
	\smallskip
	
	\item $b \cdot (c+d)= b \cdot c + b \cdot d$ for all $b, c, d \in S$.
\end{enumerate}
The operations `$+$' and `$\cdot$' are called {\it addition} and {\it multiplication}, respectively. For any $b,c \in S$, we often write $b c$ rather than $b \cdot c$. Let $S$ be a semiring. We refer to the invertible elements of the multiplicative monoid $S^\bullet$ simply as {\it units} of $S$, and we denote the set of units of $S$ by $S^\times$. A subset $S'$ of a semiring $S$ is called a {\it subsemiring} of~$S$ if $(S',+)$ is a submonoid of $(S,+)$ that contains~$1$ and is closed under multiplication. In this paper, we restrict our attention to positive semirings. A {\it positive semiring} is a subsemiring of $\mathbb{R}_{\ge 0}$ under the standard addition and multiplication. Observe that the additive monoid of a positive semiring is a positive monoid.

Let $S$ be a positive semiring. Since the additive monoids of most of the positive semirings we consider in this paper are UFMs, when we mention a monoidal divisibility or atomic property of a semiring, we will be referring to its multiplicative structure. Accordingly, we say that $S$ is {\it antimatter} (resp., {\it atomic}, an {\it LFFS}, a {\it BFS}, an {\it FFS}) provided that the multiplicative monoid $S^\bullet$ is antimatter (resp., atomic, an LFFM, a BFM, an FFM). Similarly, we say that $S$ satisfies the {\it ACCP} if its multiplicative monoid $S^\bullet$ satisfies the ACCP.

\section{Length-Finite Factorization Positive Monoids}
\label{sec:LFF positive monoids}

Following~\cite{GG20}, we say that a positive monoid is {\it decreasing} (resp., {\it increasing}) if it can be generated by a decreasing (resp., increasing) sequence, and we say that a positive monoid is {\it monotone} if it is either decreasing or increasing. Monotone positive monoids were also studied in~\cite{fG19,mB20,BG21,hP22}. It is clear that every numerical monoid is both decreasing and increasing. In general, a positive monoid is both decreasing and increasing if and only if it is finitely generated \cite[Proposition~5.4]{fG19}.

We say that a sequence of real numbers is {\it well-ordered} (resp., {\it co-well-ordered}) if it contains no strictly decreasing (resp., increasing) subsequence. Following Polo~\cite{hP22}, we say that a positive monoid $M$ is {\it well-ordered} (resp., {\it co-well-ordered}) if it can be generated by a well-ordered (resp., co-well-ordered) sequence. It follows from the definitions that every increasing (resp., decreasing) positive monoid is {\it well-ordered} (resp., {\it co-well-ordered}). As the following example illustrates, there are atomic positive monoids that are neither well-ordered nor co-well-ordered.

\begin{example} \label{ex:an atomic PM that is not monotone}
	Let $(p_n)_{n \ge 1}$ be a strictly increasing sequence of primes, and consider the positive monoid
	\[
		M := \left\langle 1 + \frac{(-1)^n}{p_n} \ \Big{|} \ n \in \mathbb{N} \right\rangle.
	\] 
	Set $a_n := 1 + \frac{(-1)^n}{p_n}$ for every $n \in \mathbb{N}$. Since for each $n \in \mathbb{N}$ there is only one defining generator, namely $a_n$, whose $p_n$-adic valuation is negative, the same defining generator must be an atom. As a consequence,
	\[
		\mathcal{A}(M) = \{a_n \mid n \in \mathbb{N}\},
	\]
	which implies that $M$ is atomic. Since $M$ is reduced, any generating set of $M$ must contain $\mathcal{A}(M)$. Therefore the fact that $(a_{2n})_{n \ge 1}$ is a strictly decreasing sequence implies that $M$ cannot contain any well-ordered generating sequence, while the fact that $(a_{2n-1})_{n \ge 1}$ is strictly increasing implies that $M$ cannot contain any co-well-ordered generating sequence. Hence $M$ is neither a well-ordered nor a co-well-ordered positive monoid.
\end{example}
\subsection{A Class of Length-Finite Factorization Monoids}

In this subsection, we identify a large class consisting of positive monoids that are LFFM: indeed, as our main result, we prove that every co-well-ordered positive monoid is an LFFM. In order to prove this, we need the following two lemmas.

\begin{lemma} \label{lem:non-co-well-ordered sums of sequences}
	The sum of finitely many co-well-ordered sequences is a co-well-ordered sequence.
\end{lemma}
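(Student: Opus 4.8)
The plan is to reduce to the case of two sequences by induction on the number of summands, and then to argue by contradiction using the elementary fact that every infinite sequence of real numbers admits a weakly monotone subsequence. I interpret the sum of the sequences $(a_n)_{n \ge 1}$ and $(b_n)_{n \ge 1}$ as an enumeration of the set of all sums $a_i + b_j$ obtained by taking one term from each sequence; since being co-well-ordered depends only on the set of values, the precise enumeration is immaterial. The key preliminary tool, which I would isolate first, is the equivalence: a sequence is co-well-ordered if and only if its set of values contains no infinite strictly increasing sequence. This is what lets me pass freely between subsequences (which require increasing indices) and sequences drawn from the value set (which do not).

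I expect this equivalence to be the main obstacle, precisely because the naive extraction below produces terms that need not form a genuine subsequence. The forward implication I need is the nontrivial one: if the value set of a co-well-ordered sequence $(a_n)_{n \ge 1}$ contained an infinite strictly increasing sequence of values $s_1 < s_2 < \cdots$, then choosing for each $k$ the first index $p_k$ at which the value $s_k$ occurs yields an infinite set $\{p_k \mid k \in \mathbb{N}\}$ of distinct positive integers. Being an infinite subset of $\mathbb{N}$, it is unbounded, so one can select $k_1 < k_2 < \cdots$ with $p_{k_1} < p_{k_2} < \cdots$, and then $(a_{p_{k_t}})_{t \ge 1} = (s_{k_t})_{t \ge 1}$ is a genuine strictly increasing subsequence of $(a_n)_{n \ge 1}$, contradicting co-well-orderedness. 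The reverse implication is immediate, since a strictly increasing subsequence is in particular a strictly increasing sequence of values.

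With this equivalence in hand, the two-summand case proceeds as follows. Suppose for contradiction that the sum contains a strictly increasing subsequence, with terms $a_{i_m} + b_{j_m}$ for $m \in \mathbb{N}$. The terms $(a_{i_m})_{m \ge 1}$ are values of the co-well-ordered sequence $(a_n)_{n \ge 1}$, so by the equivalence they contain no infinite strictly increasing subsequence; applying the monotone subsequence theorem and discarding a strictly increasing part (which would be eventually constant in the weakly increasing case), I may pass to a subsequence on which $(a_{i_m})_m$ is weakly decreasing. Along this subsequence the sums remain strictly increasing, so $b_{j_m} = (a_{i_m} + b_{j_m}) - a_{i_m}$ is a strictly increasing sequence minus a weakly decreasing one, hence strictly increasing. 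This exhibits an infinite strictly increasing sequence among the values of $(b_n)_{n \ge 1}$, which by the equivalence contradicts the hypothesis that $(b_n)_{n \ge 1}$ is co-well-ordered. The induction then upgrades this to any finite number of summands, with the base case $k = 1$ trivial. The remaining points, namely the arithmetic identity that a strictly increasing sequence minus a weakly decreasing one is strictly increasing, and the finite induction, are routine.
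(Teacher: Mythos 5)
Your proof is correct, but it differs from the paper's in two genuine respects. First, you interpret the sum as (an enumeration of) the set of all pairwise sums $a_i + b_j$, whereas the paper means the termwise sum $(a_n + b_n)_{n \ge 1}$ --- which is the form in which the lemma is later applied, the summands sharing the index $n$. Your statement is stronger and does imply the paper's: a strictly increasing subsequence of $(a_n + b_n)_{n \ge 1}$ would be an infinite strictly increasing sequence of reals lying inside the sumset, which your result forbids. Second, the price of that generality is exactly the auxiliary equivalence you isolate (a sequence is co-well-ordered if and only if its value set contains no infinite strictly increasing sequence), which you prove correctly via the first-occurrence and unboundedness argument; the paper needs no such tool, because with a shared index the extraction $(c'_n)_{n\ge 1} = (a'_n)_{n \ge 1} + (b'_n)_{n \ge 1}$ automatically produces genuine subsequences of the original sequences. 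From there both arguments run on the same engine --- contradiction plus the Monotone Subsequence Theorem --- but the paper applies it to both coordinate sequences symmetrically, concluding that both tails are eventually (weakly) decreasing and hence that the sum is eventually decreasing, a contradiction; you apply it only on the $a$-side and finish with the subtraction observation that a strictly increasing sequence minus a weakly decreasing one is strictly increasing. What your route buys is a reindexing-invariant statement together with the explicit, reusable fact that co-well-orderedness depends only on the value set; what the paper's route buys is brevity, since the termwise form is all that its application requires.
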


\begin{proof}
	It suffices to argue that the sum of two co-well-ordered sequences is again a co-well-ordered sequence as, after proving this we can induct on the number of sequences being added to obtain the statement of the lemma. Let $(a_n)_{n \ge 1}$ and $(b_n)_{n \ge 1}$ be two co-well-ordered sequences. Now suppose, for the sake of a contradiction, that the sequence $(a_n + b_n)_{n \ge 1}$ is not co-well-ordered. Let $(c'_n)_{n \ge 1}$ be a strictly increasing subsequence of $(a_n + b_n)_{n \ge 1}$. Take subsequences $(a'_n)_{n \ge 1}$ and $(b'_n)_{n \ge 1}$ of $(a_n)_{n \ge 1}$ and $(b_n)_{n \ge 1}$, respectively, such that $(c'_n)_{n \ge 1} = (a'_n)_{n \ge 1} + (b'_n)_{n \ge 1}$. In light of the Monotone Subsequence Theorem, after passing to suitable subsequences, we can assume that $(a'_n)_{n \ge 1}$ and $(b'_n)_{n \ge 1}$ are both monotone sequences. Since $(a_n)_{n \ge 1}$ and $(b_n)_{n \ge 1}$ are both co-well-ordered, neither $(a'_n)_{n \ge 1}$ nor $(b'_n)_{n \ge 1}$ can contain a strictly increasing subsequence. Hence there exists $k \in \mathbb{N}$ such that the sequences $(a'_n)_{n \ge k}$ and $(b'_n)_{n \ge k}$ are decreasing. This implies that $(c'_n)_{n \ge k}$ is also a decreasing sequence, which contradicts that $(c'_n)_{n \ge 1}$ is a strictly increasing sequence and, therefore, contains no decreasing subsequences. Hence $(a_n + b_n)_{n \ge 1}$ is a co-well-ordered sequence.
\end{proof}

A pair $(z_1,z_2)$ of factorizations in $\mathsf{Z}(M)$ of the same element of $M$ is called \textit{irredundant} if $z_1$ and $z_2$ do not share any atoms. In addition, we say that a subset $Z$ of $\mathsf{Z}(M)$ is {\it irredundant} if any two distinct factorizations in $Z$ are irredundant.

 \begin{lemma}\label{lem:irredundant set of factorizations}
	 	Let $M$ be an atomic co-well-ordered positive monoid. For all $x \in M$ and $\ell \in \mathbb{N}$, every irredundant subset of $\mathsf{Z}_\ell(x)$ is finite.
	 \end{lemma}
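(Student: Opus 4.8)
The plan is to argue by contradiction: assume that for some $x \in M$ and $\ell \in \mathbb{N}$ there is an \emph{infinite} irredundant subset of $\mathsf{Z}_\ell(x)$, enumerate it as $z_1, z_2, \dots$, and from it manufacture a strictly increasing sequence of atoms, which is impossible in a co-well-ordered monoid. The reason such a sequence is impossible is the observation I would record first: because $M$ is reduced, every generating set of $M$ contains $\mathcal{A}(M)$ (as already noted in Example~\ref{ex:an atomic PM that is not monotone}), so a co-well-ordered generating sequence of $M$ contains all the atoms of $M$. Consequently $\mathcal{A}(M)$ admits no strictly increasing sequence, since a strictly increasing sequence of atoms would yield a strictly increasing subsequence of that generating sequence, contradicting co-well-orderedness.

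Next, for each $i$ I would sort the $\ell$ atoms comprising $z_i$ into a decreasing tuple $a_{i,1} \ge a_{i,2} \ge \cdots \ge a_{i,\ell} > 0$, so that $\sum_{j=1}^{\ell} a_{i,j} = x$ for every $i$. This produces $\ell$ sequences $(a_{i,j})_{i \ge 1}$, one for each coordinate $j \in [\![ 1, \ell ]\!]$, all taking values in the bounded set $(0,x]$. Applying the Monotone Subsequence Theorem $\ell$ times in succession (first extracting a subsequence of $(z_i)_{i\ge1}$ along which the first coordinate is monotone, then refining for the second, and so on), I can pass to a single subsequence along which every coordinate sequence $(a_{i,j})_i$ is monotone. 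Here irredundancy does the essential work: since distinct factorizations in the family share no atoms, $a_{i,j} \ne a_{i',j}$ whenever $i \ne i'$, so each coordinate sequence has pairwise distinct terms and is therefore \emph{strictly} monotone.

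Finally I would exploit that the column sums are constant. If every coordinate sequence $(a_{i,j})_i$ were strictly decreasing, then $\sum_{j=1}^{\ell} a_{i,j}$ would be strictly decreasing in $i$, contradicting $\sum_{j=1}^{\ell} a_{i,j} = x$ for all $i$. Hence at least one coordinate $j_0$ is strictly increasing, and $(a_{i,j_0})_i$ is then a strictly increasing sequence of atoms of $M$, contradicting the first paragraph; this forces the irredundant subset to be finite. I do not expect any genuinely hard step here: the Monotone Subsequence extraction and the link between the co-well-ordered hypothesis and $\mathcal{A}(M)$ are routine. The two points that must be handled with care are using irredundancy to upgrade monotonicity to \emph{strict} monotonicity, and recognizing that the constancy of the total sum is precisely what rules out the "all coordinates decreasing" case and thereby produces the desired strictly increasing coordinate.
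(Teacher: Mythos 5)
Your proof is correct, and while it shares the paper's overall strategy (contradiction, sorting the atoms of each factorization, the Monotone Subsequence Theorem, irredundancy to upgrade monotone to strictly monotone, and the constant sum $x$), it diverges at the key step in a way worth noting. The paper extracts a monotone subsequence only for the \emph{first} (smallest) atom of each factorization, uses co-well-orderedness to force that coordinate to be strictly decreasing, observes that the sum of the remaining $\ell-1$ coordinates is then strictly increasing, and finally invokes Lemma~\ref{lem:non-co-well-ordered sums of sequences} (the sum of finitely many co-well-ordered sequences is co-well-ordered) to locate a coordinate $j \ge 2$ carrying a strictly increasing sequence of atoms. You instead iterate the Monotone Subsequence Theorem $\ell$ times to make \emph{every} coordinate monotone along one common subsequence; irredundancy makes each coordinate strictly monotone, and since they cannot all be strictly decreasing (the column sums are constantly $x$), some coordinate is strictly increasing. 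This makes Lemma~\ref{lem:non-co-well-ordered sums of sequences} entirely unnecessary, so your argument is more self-contained and arguably cleaner; the paper's version keeps the extraction minimal and isolates the combinatorial content in a separate, reusable lemma. One shared point deserving care: both you and the paper rely on the fact that a co-well-ordered positive monoid admits no strictly increasing sequence of atoms. Your one-line justification (atoms must appear in the co-well-ordered generating sequence) silently skips a re-indexing step, since a strictly increasing sequence of atoms may occur at non-monotone positions in the generating sequence; one further application of the Monotone Subsequence Theorem to the (distinct) index sequence repairs this, because an infinite strictly decreasing sequence of positive integers is impossible. The paper asserts the same fact with no justification at all, so this is a routine patch rather than a gap specific to your write-up.
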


\begin{proof}
	Fix $x \in M$ and $\ell \in \mathbb{N}$. For the sake of a contradiction, suppose that there exists an infinite irredundant subset of $\mathsf{Z}_\ell(x)$. Therefore there exists a sequence of factorizations $(z_n)_{n \ge 1}$ whose terms are pairwise distinct and whose underlying set is an irredundant subset of $\mathsf{Z}_\ell(x)$. For a factorization $z := a_1 \cdots a_\ell \in \mathsf{Z}(M)$ with $a_1, \dots, a_\ell \in \mathcal{A}(M)$ such that $a_1 \le \dots \le a_\ell$ and for each $k \in [\![ 1, \ell ]\!]$, we call $a_k$ the $k$-{\it th atom} of~$z$. For each $n \in \mathbb{N}$ and $k \in [\![ 1, \ell ]\!]$, let $a_n^{(k)}$ denote the $k$-th atom of $z_n$. By virtue of the Monotone Subsequence Theorem, the sequence $( a_n^{(1)} )_{n \ge 1}$ has a monotone subsequence. As a result, after replacing $(z_n)_{n \ge 1}$ by a suitable subsequence, we can assume that $( a_n^{(1)} )_{n \ge 1}$ is monotone. Since the underlying set of $(z_n)_{n \ge 1}$ is irredundant, the sequence $( a_n^{(1)} )_{n \ge 1}$ must be either strictly decreasing or strictly increasing. The fact that $M$ is co-well-ordered, however, ensures that no sequence of atoms of $M$ can be strictly increasing, and so we can assume that $( a_n^{(1)} )_{n \ge 1}$ is strictly decreasing. This, along with the fact that $\sum_{i=1}^\ell a_n^{(i)} = x$ for every $n \in \mathbb{N}$, ensures that the sequence $(s_n)_{n \ge 1} := (a_n^{(2)})_{n \ge 1} + \dots + (a_n^{(\ell)})_{n \ge 1}$ is strictly increasing. Therefore $(s_n)_{n \ge 1}$ is not co-well-ordered, and so it follows from Lemma~\ref{lem:non-co-well-ordered sums of sequences} that $(a_n^{(j)})_{n \ge 1}$ is not co-well-ordered for some $j \in [\![ 2, \ell ]\!]$. Thus, there exists a strictly increasing sequence whose terms are atoms of $M$, which contradicts the fact that $M$ is co-well-ordered. Hence for all $x \in M$ and $\ell \in \mathbb{N}$, every irredundant subset of $\mathsf{Z}_\ell(x)$ must be finite.
\end{proof}

We are in a position to prove the main result of this section.

\begin{theorem} \label{thm:main result}
	Every atomic co-well-ordered positive monoid is an LFFM.
\end{theorem}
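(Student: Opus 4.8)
The plan is to prove, by induction on the prescribed length $\ell \in \mathbb{N}$, that $\mathsf{Z}_\ell(x)$ is finite for every $x \in M$; since $M$ is atomic by hypothesis, establishing this finiteness is exactly what is needed to conclude that $M$ is an LFFM. For the base case $\ell = 1$, I would observe that $\mathsf{Z}_1(x)$ consists precisely of those atoms $a \in \mathcal{A}(M)$ with $a = x$, and because a positive monoid is reduced there can be at most one such atom; hence $|\mathsf{Z}_1(x)| \le 1$.

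For the inductive step, I would assume that $\mathsf{Z}_k(y)$ is finite for every $y \in M$ and every $k \in [\![ 1, \ell - 1 ]\!]$, and then bound $\mathsf{Z}_\ell(x)$. The key device is Lemma~\ref{lem:irredundant set of factorizations}: since every irredundant subset of $\mathsf{Z}_\ell(x)$ is finite, there is a well-defined maximum cardinality among all such subsets, so I would fix an irredundant subset $W \subseteq \mathsf{Z}_\ell(x)$ of maximum cardinality. Being of maximum size, $W$ is maximal under inclusion. I would then let $A_W$ denote the set of all atoms appearing in the factorizations belonging to $W$; this set is finite because $W$ is finite and each of its factorizations involves at most $\ell$ distinct atoms.

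The heart of the argument is to show that every factorization $z \in \mathsf{Z}_\ell(x)$ contains at least one atom from $A_W$. If $z \in W$ this is immediate, and if $z \notin W$, then maximality of $W$ prevents $z$ from being adjoined to $W$ without destroying irredundancy, which forces $z$ to share an atom with some element of $W$ and hence to contain an atom lying in $A_W$. To finish, I would bound the number of factorizations in $\mathsf{Z}_\ell(x)$ that contain a fixed atom $a \in A_W$: deleting one copy of $a$ from such a factorization yields a length-$(\ell-1)$ factorization of $x - a \in M$, so the number of such factorizations is at most $|\mathsf{Z}_{\ell-1}(x-a)|$, which is finite by the inductive hypothesis. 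Summing over the finitely many atoms of $A_W$ then gives
\[
	|\mathsf{Z}_\ell(x)| \le \sum_{a \in A_W} |\mathsf{Z}_{\ell-1}(x-a)| < \infty,
\]
which completes the induction.

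The hard part will be the passage from the irredundant case supplied by Lemma~\ref{lem:irredundant set of factorizations} to the general case, since a priori $\mathsf{Z}_\ell(x)$ need not be irredundant and distinct factorizations may overlap arbitrarily. The maximality-of-$W$ argument is precisely what converts the finiteness of irredundant subsets into a finite ``covering'' of $\mathsf{Z}_\ell(x)$ by shared atoms, after which the inductive hypothesis on shorter lengths carries the rest of the argument.
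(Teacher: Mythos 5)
Your overall strategy---induct on $\ell$, use Lemma~\ref{lem:irredundant set of factorizations} to produce a finite maximal irredundant subset $W$ of $\mathsf{Z}_\ell(x)$, observe that every factorization in $\mathsf{Z}_\ell(x)$ must then contain an atom appearing in $W$, and inject the factorizations containing a fixed atom $a$ into $\mathsf{Z}_{\ell-1}(x-a)$---is exactly the paper's argument (the paper phrases the induction as a minimal-counterexample argument, which is the same thing). However, there is one genuine gap: the step where you claim that since every irredundant subset of $\mathsf{Z}_\ell(x)$ is finite, ``there is a well-defined maximum cardinality among all such subsets.'' This inference is invalid: a family of sets each of which is finite can perfectly well have unbounded cardinalities, in which case no member of maximum cardinality exists. (Compare: partition an infinite vertex set into blocks of sizes $1,2,3,\dots$, join every pair of vertices lying in distinct blocks, and consider the independent sets of the resulting graph; each independent set lies inside a single block and is finite, yet their sizes are unbounded.) In the present situation no uniform bound on the sizes of irredundant subsets of $\mathsf{Z}_\ell(x)$ is available before the theorem is proved---indeed, once the theorem is known, $\mathsf{Z}_\ell(x)$ itself is finite, so any justification of boundedness along these lines risks being circular.

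The repair is standard, and it is what the paper does: order the irredundant subsets of $\mathsf{Z}_\ell(x)$ by \emph{inclusion} rather than by cardinality. The union of any chain of irredundant subsets is again irredundant (any two factorizations in the union lie in a common member of the chain), so by Zorn's lemma---or by the paper's explicit argument, which uses Lemma~\ref{lem:irredundant set of factorizations} to show that any strictly ascending chain of irredundant subsets must terminate---there exists an irredundant subset $W$ that is maximal under inclusion, and Lemma~\ref{lem:irredundant set of factorizations} then guarantees that this $W$ is finite. Maximality under inclusion is all your subsequent argument actually uses (it is what forces every $z \in \mathsf{Z}_\ell(x) \setminus W$ to share an atom with some member of $W$), so with this substitution the rest of your induction goes through verbatim and coincides with the paper's proof.
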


\begin{proof}
	Let $M$ be an atomic co-well-ordered positive monoid. Now suppose, by way of contradiction, that $M$ is not an LFFM. Then for some $\ell \in \mathbb{N}$ there exists $x \in M$ such that $|\mathsf{Z}_\ell(x)|  = \infty$. Furthermore, assume that we have chosen $\ell$ as the smallest as it can possibly be. Since $|\mathsf{Z}_1(x)| \le 1$, we see that $\ell \ge 2$. Our next step is to argue the existence of a maximal irredundant subset of $\mathsf{Z}_\ell(x)$ under inclusion.
	
	Let $\mathcal{Z}$ be the poset consisting of all irredundant subsets of $\mathsf{Z}_\ell(x)$ under inclusion. As $\mathsf{Z}_\ell(x)$ is infinite, and so nonempty, $\mathcal{Z}$ is a nonempty poset. In addition, it follows from Lemma~\ref{lem:irredundant set of factorizations} that every element in $\mathcal{Z}$ is a finite set. Now suppose that $\mathcal{C} := \{Z_\gamma \mid \gamma \in \Gamma\}$ is a nonempty chain in the poset $\mathcal{Z}$. We claim that $U := \bigcup_{\gamma \in \Gamma} Z_\gamma$ is an upper bound for $\mathcal{C}$ in $\mathcal{Z}$. If $z$ and $z'$ are two distinct factorizations in $U$, then as $\mathcal{C}$ is a chain, we can pick $\gamma \in \Gamma$ such that $z$ and $z'$ are both contained in $Z_\gamma$. As $Z_\gamma \in \mathcal{Z}$, it must be an irredundant set of factorizations, and so $z$ and $z'$ do not share any atoms. Hence~$U$ is also an irredundant set of factorizations, all of them contained in $\mathsf{Z}_\ell(x)$. As a result, $U \in \mathcal{Z}$, and so $U$ is an upper bound for the chain $\mathcal{C}$ in $\mathcal{Z}$. Now we show that $\mathcal{Z}$ has a maximal element. Pick some element $Z_1\in\mathcal{Z}$. If it is maximal, then we are done. Otherwise we can pick $Z_2\in \mathcal{Z}$ such that $Z_1\subsetneq Z_2$. If $Z_2$ is maximal, we are done. Otherwise we can pick $Z_3\in \mathcal{Z}$ such that $Z_2\subsetneq Z_3$. If $Z_3$ is maximal, we are done. Continuing this process, this process either terminates, in which case we are done, or we have an infinite chain $Z_1\subsetneq Z_2\subsetneq\cdots\subset U$ of factorizations in $\mathcal{Z}$. Since each $Z_i$ adds at least one new element to $U$, this implies that $U$ must have infinitely many elements, contradicting Lemma~\ref{lem:irredundant set of factorizations}. Thus $\mathcal{Z}$ has a maximal element. 
	
	Let $Z$ be a maximal element of $\mathcal{Z}$, that is, an irredundant subset of $\mathsf{Z}_\ell(x)$ that is not properly contained in any other irredundant subset of $\mathsf{Z}_\ell(x)$. Let $A$ be the set consisting of all the atoms of $M$ that appear in at least one factorization in $Z$. Because~$Z$ is finite, so is $A$. The maximality of $Z$ guarantees that every factorization in $\mathsf{Z}_\ell(x)$ shares an atom with at least one factorization in $Z$, and so every factorization in $\mathsf{Z}_\ell(x)$ contains an atom from $A$. As a consequence, after setting
	\[
		Z_a := \{z \in \mathsf{Z}_\ell(x) \mid a \text{ appears in } z\}
	\]
	for each $a \in A$, we obtain that $\mathsf{Z}_\ell(x) = \bigcup_{a \in A} Z_a$. Since $A$ is finite, the fact that $\mathsf{Z}_\ell(x)$ contains infinitely many factorizations ensures the existence of $a \in A$ such that $|Z_a| = \infty$. By definition of $Z_a$, every factorization of $Z_a$ contains the atom $a$. Therefore the set of factorizations $Z'_a := \{z-a \mid z \in Z_a \}$ is a subset of $\mathsf{Z}_{\ell - 1}(x-a)$. Since $|Z_a'| = |Z_a| = \infty$, it follows that $\mathsf{Z}_{\ell - 1}(x-a)$ is infinite, which contradicts the minimality of $\ell$. Hence $M$ is an LFFM.
\end{proof}

The converse of Theorem~\ref{thm:main result} does not hold in general. The following example, which is a continuation of Example~\ref{ex:an atomic PM that is not monotone}, illustrates this observation.

\begin{example}
	Let $(p_n)_{n \ge 1}$ be a strictly increasing sequence of primes, and consider the positive monoid
	\[
		M := \left\langle 1 + \frac{(-1)^n}{p_n} \ \Big{|} \ n \in \mathbb{N} \right\rangle.
	\] 
	We have already seen in Example~\ref{ex:an atomic PM that is not monotone} that $M$ is an atomic positive monoid with $\mathcal{A}(M) = \{a_n \mid n \in \mathbb{N}\}$, where $a_n := 1 + \frac{(-1)^n}{p_n}$ for every $n \in \mathbb{N}$. We have also observed in the same example that $M$ is not co-well-ordered. We claim that $M$ is an FFM and, therefore, an LFFM. To argue this, fix a nonzero $q \in M$. Take $n_q \in \mathbb{N}$ such that $p_k \nmid \mathsf{d}(q)$ for any $k \ge n_q$, and then set $N := \max\{n_q, q+1\}$. Now suppose that the atom $a_k$ appears in a factorization of~$q$. We will argue that $k \le N$. Write $q = \sum_{i=1}^n c_i a_i$ for some $n \in \mathbb{N}_{\ge k}$ and $c_1, \dots, c_n \in \mathbb{N}_0$ such that $c_k \neq 0$. If $p_k \mid \mathsf{d}(q)$, then $k < n_q \le N$. Assume, on the other hand, that $p_k \nmid \mathsf{d}(q)$. This being the case, after applying $p_k$-adic valuations to both sides of the equality $q = \sum_{i=1}^n c_i a_i$, we find that $p_k \mid c_k$. Therefore $q \ge c_k a_k \ge p_k a_k = p_k + (-1)^k$, which implies that $k \le p_k \le q+1 \le N$. Hence $k \le N$, as desired, and so $q$ is divisible in $M$ by only finitely many atoms. Since every element of $M$ is divisible by only finitely many atoms in $M$, it follows from \cite[Theorem~2]{fHK92} that $M$ is an FFM and, in particular, an LFFM.
\end{example}

\medskip
\subsection{Some Concrete Examples}

Although every FFM is an LFFM, the converse of this implication does not hold in general. As the following examples illustrate, there are positive monoids that are LFFM but do not even satisfy the ACCP.

\begin{example} \label{ex:LFFM not ACCP} \hfill
	\begin{enumerate}
		\item[(a)] Let $(p_n)_{n \ge 0}$ be the strictly increasing sequence whose underlying set is $\mathbb{P} \setminus \{2\}$. Then consider the positive monoid defined as follows:
		\[
			M := \left\langle \frac1{2^n p_n} \ \Big{|} \ n \in \mathbb{N}_0 \right\rangle.
		\]
		This monoid is often called the Grams' monoid as it is the essential ingredient used by Grams in her construction of the first atomic domain that does not satisfy the ACCP (see~\cite{aG74} for more details and~\cite{GL23} for a recent generalization). It is well known that $M$ is atomic with
		\[
			\mathcal{A}(M) = \Big\{ \frac1{2^n p_n}  \ \Big{|} \  n \in \mathbb{N}_0 \Big\}
		\]
		but does not satisfy the ACCP: indeed, $(\frac1{2^n} + M )_{n \ge 0}$ is an ascending chain of principal ideals that does not stabilize. Finally, since the sequence of defining generators of $M$ is decreasing, $M$ is a decreasing monoid and, therefore, $M$ is an LFFM by virtue of Theorem~\ref{thm:main result}.
		\smallskip
		
		\item[(b)] Choose $q \in \mathbb{Q} \cap (0,1)$ such that $q^{-1} \notin \mathbb{N}$, and consider the positive monoid $\mathbb{N}_0[q] := \langle q^n \mid n \in \mathbb{N}_0 \rangle$. Using the fact that $q^{-1} \notin \mathbb{N}$, we can argue that $\mathbb{N}_0[q]$ is atomic with $\mathcal{A}(\mathbb{N}_0[q]) = \{q^n \mid n \in \mathbb{N}_0\}$ (this is well known: see \cite[Theorem~6.2]{GG18} and also \cite[Theorem~4.2]{CG22}). On the other hand, $\mathbb{N}_0[q]$ does not satisfy the ACCP because $\mathsf{d}(q) q^n = (\mathsf{d}(q) - \mathsf{n}(q)) q^n + \mathsf{d}(q) q^{n+1}$ for every $n \in \mathbb{N}_0$, where $\mathsf{n}(q)$ and $\mathsf{d}(q)$ denote the numerator and denominator of $q$, respectively. Lastly, note that because the sequence $(q^n)_{n \ge 0}$ is decreasing, Theorem~\ref{thm:main result} ensures that $\mathbb{N}_0[q]$ is an LFFM. Hence the positive monoid $\mathbb{N}_0[q]$ is an LFFM that does not satisfy the ACCP.
	\end{enumerate}
\end{example}

Thus, being an LFFM does not imply satisfying the ACCP and, therefore, not every LFFM is a BFM. Let us proceed to exhibit a positive monoid that is a BFM but not an LFFM.

\begin{example} \label{ex:BFM not LFFM}
	Consider the positive monoid $M := \{0\} \cup \mathbb{Q}_{\ge 1}$. One can readily verify that $\mathcal{A}(M) = \mathbb{Q} \cap [1,2)$ and, therefore, that $M$ is atomic. Since $0$ is not a limit point of $M^\bullet$, it follows from \cite[Proposition~4.5]{fG19} that $M$ is a BFM. In order to argue that $M$ is not an LFFM, it suffices to observe that the element $3$ has infinitely many length-$2$ factorizations in $M$: indeed, the equality $3 = \big( \frac32 - \frac1n \big) + \big( \frac32 + \frac1n \big)$ yields a length-$2$ factorization of $3$ for each $n \in \mathbb{N}$ with $n \ge 3$.
\end{example}

We have seen in Example~\ref{ex:LFFM not ACCP} that being an LFFM is not a sufficient condition for satisfying the ACCP. On the other hand, even if a positive monoid is an LFFM satisfying the ACCP it still may not be a BFM. The following example sheds some light upon this observation.

\begin{example} \label{ex:LFFM and ACCP not BFM}
	Consider the positive monoid $M = \big\langle \frac1p \mid p \in \mathbb{P} \big\rangle$. It is well known that $\mathcal{A}(M) = \big\{ \frac1p \mid p \in \mathbb{P} \big\}$ and also that $M$ satisfies the ACCP (see \cite[Example~3.3]{AG22} or \cite[Proposition~4.2(2)]{fG22} for more details). In addition, since the set of atoms of $M$ is the underlying set of the decreasing sequence $\big( \frac1p \big)_{p \in \mathbb{P}}$, it follows from Theorem~\ref{thm:main result} that $M$ is an LFFM. Finally, observe that $M$ is not a BFM: indeed, $\mathsf{Z}(1) = \big\{p \frac1p \mid p \in \mathbb{P}\}$ and so $\mathsf{L}(1) = \mathbb{P}$. Hence the positive monoid $M$ is an LFFM, satisfies the ACCP, but is not a BFM.
\end{example}

\section{Length-Finite Factorization Positive Semirings}
\label{sec:LFF positive semirngs}
\subsection{A Class of Length-Finite Factorization Positive Semirings}

We can use any positive monoid to construct positive semirings by using certain exponentiation. Let $M$ be a positive monoid. Following~\cite{BCG21}, we set
\[
	E(M) := \big\langle e^m \mid m \in M \big\rangle;
\]
that is, $E(M)$ is the positive monoid generated by the set of positive real numbers $e(M) := \{e^m \mid m \in M\}$. We observe that $1 \in E(M)$ and also that $E(M)$ is closed under multiplication. Hence $E(M)$ is a positive semiring. Also, the fact that $1 = \min E(M)^\bullet$, implies that the multiplicative monoid $E(M)^\bullet$ is reduced, and so $E(M)$ is a reduced positive semiring. Observe that $e(M)$ is a multiplicative monoid that is naturally isomorphic to the positive monoid~$M$.

Suppose now that $M$ consists of algebraic numbers, which is the case we primarily target here. Then it follows from Lindemann-Weierstrass Theorem that the set $e(M)$ is linearly independent over the algebraic numbers (see \cite[Chapter~1]{aB90}), and so the additive monoid $E(M)$ is a free commutative monoid on $e(M)$. Additionally, it follows from \cite[Lemma~2.7]{BCG21}\footnote{Although \cite[Lemma~2.7]{BCG21} is stated for positive monoids, it only applies to positive monoids consisting of algebraic numbers.} that $e(M)$ is a divisor-closed submonoid of the multiplicative monoid of $E(M)$. In this case, every nonzero element $r \in E(M)$ can be written uniquely as $r = c_1 e^{m_1} + \dots + c_k e^{m_k}$ for some coefficients $c_1, \dots, c_k \in \mathbb{N}$ and exponents $m_1, \dots, m_k \in M$ such that $m_1 < \dots < m_k$: as for polynomials, we call $\text{LC}(r) := c_k$ and $\deg r := m_k$ the {\it leading coefficient} and the {\it degree} of $r$, respectively. In addition, we call $S_E(r) := \{m_1,\ldots, m_k\}$ the \textit{exponent set} of $r$. 

It turns out that the properties of satisfying the ACCP, being a BFM, and being an FFM ascend from a positive monoid $M$ to $E(M)$ provided that every element of $M$ is algebraic. We proceed to prove this.

\begin{proposition} \label{prop:ascending properties WRT exponentiation}
	Let $M$ be a positive monoid consisting of algebraic numbers. Then the following statements hold.
	\begin{enumerate}
		\item If $M$ satisfies the ACCP, then $E(M)$ satisfies the ACCP.
		\smallskip
		
		\item If $M$ is a BFM, then $E(M)$ is a BFS.
		\smallskip
		
		\item If $M$ is an FFM, then $E(M)$ is an FFS.
%
	\end{enumerate}
\end{proposition}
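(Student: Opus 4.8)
The plan is to route all three parts through two multiplicative homomorphisms out of $E(M)^\bullet$. Because $M$ consists of algebraic numbers, the additive monoid of $E(M)$ is free on $e(M)$, so no cancellation can occur when multiplying: if $r = \sum_{m \in S_E(r)} c_m e^m$ and $s = \sum_{m \in S_E(s)} d_m e^m$ with positive integer coefficients, then in $rs$ the top exponent $\deg r + \deg s$ is attained only by the product of the two leading terms. Hence $\deg \colon E(M)^\bullet \to M$ is a monoid homomorphism from $(E(M)^\bullet,\cdot)$ to $(M,+)$ with $\deg(rs) = \deg r + \deg s$, and likewise $\mathrm{LC}\colon E(M)^\bullet \to (\mathbb{N},\cdot)$ satisfies $\mathrm{LC}(rs) = \mathrm{LC}(r)\,\mathrm{LC}(s)$. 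The only nontrivial input here is freeness (from Lindemann--Weierstrass); I expect verifying these two homomorphism properties, together with the observation that $\deg s = 0$ forces $s \in \mathbb{N}$ (all exponents being nonnegative), to be the routine backbone of all three parts.

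For (1): an ascending chain of principal ideals in $E(M)^\bullet$ corresponds to a divisibility chain $r_{n+1} \mid r_n$, say $r_n = r_{n+1} s_n$. Applying $\deg$ turns this into the ascending chain of principal ideals $(\deg r_n + M)_{n\ge 1}$ in $M$, which stabilizes by the ACCP of $M$; since $M$ is reduced this forces $\deg s_n = 0$, hence $s_n \in \mathbb{N}$, for all large $n$. I would then apply $\mathrm{LC}$: from $\mathrm{LC}(r_n) = s_n\,\mathrm{LC}(r_{n+1})$ the positive integers $\mathrm{LC}(r_n)$ become nonincreasing, so they stabilize, forcing $s_n = 1$ and $r_n = r_{n+1}$ eventually. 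Thus the original chain stabilizes.

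For (2): since a BFM satisfies the ACCP, part (1) already gives that $E(M)$ satisfies the ACCP and is therefore atomic, so it remains to bound factorization lengths. Fix a nonunit $r$ and a factorization $r = a_1 \cdots a_n$ into atoms, and split the atoms according to whether $\deg a_i > 0$ or $\deg a_i = 0$. The degree-zero atoms are exactly the rational primes, and since $\mathrm{LC}(r) = \prod_i \mathrm{LC}(a_i) \ge 2^{\#\{i : \deg a_i = 0\}}$, there are at most $\log_2 \mathrm{LC}(r)$ of them. For the positive-degree atoms, additivity of $\deg$ gives $\deg r = \sum_{\deg a_i > 0} \deg a_i$, a sum of that many nonzero elements of $M$; each such element is itself a sum of at least one atom of $M$, so the count is at most $\max \mathsf{L}(\deg r)$, which is finite because $M$ is a BFM. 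Adding the two bounds caps $n$ independently of the factorization.

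For (3): since an FFM is a BFM, $E(M)$ is atomic by the above, so it suffices to show that each $r \in E(M)^\bullet$ has only finitely many divisors. Writing $r = r' r''$, additivity of $\deg$ shows $\deg r''$ is a divisor of $\deg r$ in $M$, and the FFM hypothesis on $M$ makes the set $D$ of such divisors finite. The key step, which I expect to be the main obstacle, is controlling the \emph{entire} exponent set of $r'$, not merely its degree: for every $m' \in S_E(r')$ the term $e^{m' + \deg r''}$ survives in $r$ (no cancellation), so $m' + \deg r'' \in S_E(r)$, whence $m' \in \{\sigma - \delta : \sigma \in S_E(r),\ \delta \in D\}$, a fixed finite set. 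A parallel comparison of coefficients, using $\mathrm{LC}(r'') \ge 1$, bounds every coefficient of $r'$ by $\max_{\sigma \in S_E(r)} c_\sigma$. Hence $r'$ ranges over finitely many possibilities, $r$ has finitely many divisors, and an atomic monoid in which every element has finitely many divisors is an FFM.
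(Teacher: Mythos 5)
Your proof is correct. Parts (1) and (2) rest on exactly the same two ingredients as the paper's proof --- additivity of $\deg$ and multiplicativity of $\mathrm{LC}$ on $E(M)^\bullet$, both consequences of the freeness of the additive monoid $E(M)$ on $e(M)$ --- so there the difference is only organizational: the paper runs (1) by contradiction from a non-stabilizing chain $s = t_n \prod_{i=1}^n s_i$ and counts how many factors can have degree zero, whereas you argue directly that stabilization of $(\deg r_n + M)$ forces $\deg s_n = 0$, hence $s_n \in \mathbb{N}$, and then squeeze $s_n = 1$ eventually out of the nonincreasing sequence of positive integers $\mathrm{LC}(r_n)$; your version is tighter and cleaner than the paper's bookkeeping. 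Part (3) is where you genuinely diverge. The paper fixes a length $\ell$ and shows each $q$ has only finitely many length-$\ell$ factorizations --- controlling the exponent sets of the atoms $a_i$ via subsets of the finitely many $M$-factorizations of elements of $S_E(q)$, and the coefficients via $c \le q$ --- and then combines this with the BFS conclusion of part (2). You instead bound the full divisor set of $r$: shifting $S_E(r')$ into the finite set $\{\sigma - \delta \mid \sigma \in S_E(r),\ \delta \in D\}$ and bounding each coefficient of $r'$ by $\max_{\sigma \in S_E(r)} c_\sigma$, then invoking the standard fact (essentially \cite[Theorem~2]{fHK92}) that an atomic monoid with finite divisor sets is an FFM. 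Your route yields the stronger intermediate statement that divisor sets of $E(M)$ are finite and bypasses the BFS step entirely, at the cost of quoting the FFM-iff-finite-divisor-sets equivalence in both directions (you need it for $M$, to make $D$ finite, and for $E(M)$, to conclude); the paper's route is more self-contained, staying at the level of counting factorizations of each prescribed length.
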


\begin{proof}
	(1) Assume that $M$ satisfies the ACCP. Now suppose, by way of contradiction, that $E(M)$ does not satisfy the ACCP. Then there exists an element $s \in E(M)$ and two sequences $(s_n)_{n \ge 1}$ and $(t_n)_{n \ge 1}$ whose terms are nonunits of $E(M)$ such that $s = t_n \prod_{i=1}^n s_i$ for every $n \in \mathbb{N}$. Therefore, for each $n \in \mathbb{N}$,
	\[
		\deg s = \deg t_n + \sum_{i=1}^n \deg s_i \quad \text{ and } \quad \text{LC}(s) = \text{LC}(t_n) \prod_{i=1}^n \text{LC}(s_i).
	\]
	Let $\ell$ be the length of the only factorization of $\text{LC}(s)$ in $\mathbb{N}$. Then at most $\ell$ of the real numbers $\text{LC} (s_1), \dots, \text{LC}(s_n), \text{LC} (t_n)$ can be greater than one. Since the terms $s_k$ and $t_k$ are nonunits for every $k \in \mathbb{N}$, at least $n+1-\ell$ of the real numbers $\deg s_1, \dots, \deg s_n, \deg t_n$ must be nonzero. Thus, at least $n-\ell$ of the real numbers $\deg s_1,\dots,\deg s_n$ must be nonzero. Now consider the ascending chain of principal ideals $\deg t_1+M, \deg t_2+M,\dots$ in $M$. Since at least $n-\ell$ of the real numbers $\deg s_1,\dots,\deg s_n$ must be nonzero for arbitrarily large $n$, and $\deg s_n=\deg t_{n-1}-\deg t_{n}$, this means that this ascending chain of principal ideals must not stabilize. Thus, $M$ does not satisfy the ACCP, a contradiction.
    Thus $E(M)$ satisfies the ACCP.

	\smallskip
	
	(2) Since $M$ is a BFM, it must satisfy the ACCP, and so it follows from part~(1) that $E(M)$ also satisfies the ACCP. Thus, $E(M)$ is atomic. Suppose, by way of contradiction, that $E(M)$ is not a BFS. Then there exists $q\in E(M)$ such that there is no bound to the lengths of factorizations of $q$. Suppose $q=q_1 \cdots q_n$ for nonunits $q_1,\dots,q_n$. Then
	\[
		\deg q = \sum_{i=1}^n \deg q_i \quad \text{ and } \quad \text{LC}(q) = \prod_{i=1}^n \text{LC}(q_i).
	\]
	Let $\ell$ be the length of the factorization $\text{LC}(q)$ in $\mathbb{N}$. Then we see that at most $\ell$ of the real numbers $\text{LC}(q_1),\dots ,\text{LC}(q_n)$ can be greater than one. Since the $q_1, \dots, q_n$ are nonunits, at least $n-\ell$ of them must have degree different from zero. Therefore, after replacing each $\deg q_i$ by one of its factorizations in $M$, we obtain a factorization for $\deg q$ with length at least $n - \ell$. But as $n$ approaches infinity, $n - \ell$ approaches infinity, which means that there is no upper bound to the length of factorizations of $\deg q$ in $M$, contradicting the fact that $M$ is a BFM. Hence $E(M)$ is a BFS.
	\smallskip
	
	(3) Assume that $M$ is an FFM. Since $M$ is a BFM, it follows from (2) that $E(M)$ is a BFS. We claim that, for any positive integers $\ell$ and $q \in E(M)$, there are finitely many ways to choose $a_1,\dots, a_\ell \in \mathcal{A}(E(M))$ such that $q = a_1 \cdots a_\ell$. Since $E(M)$ is a BFS, this would imply that $E(M)$ is also an FFS.
	
	First, we prove that there are finitely many choices for $a_1$. Set $m := m_{1} + \dots + m_{\ell}$, where $m_1 \in S_{E}(a_1), \dots, m_\ell \in S_{E}(a_\ell)$. Observe that $m \in S_E(q)$. Therefore $m_1$ is the sum of a subset of atoms in a factorization of $m$ with respect to monoid $M$. Since $M$ is an FFM, there are finitely many such factorizations. Given any such factorization of length $\ell_m$, there are at most $2^{\ell_m}$ ways to choose $m_1$. Since there are finitely many factorizations, there are finitely many ways to choose $m_1$. Thus, there are finitely many ways to choose the exponent set of $a_1$. 
	
	Now observe that $a_{1} \le q$. Therefore $c \le q$ for every coefficient $c$ of $a_{1}$. If $a_{1}$ has $\ell_{1}$ terms, then there are at most $q^{\ell_{1}}$ ways to choose all the coefficients of $a_{1}$, so there are finitely many ways to choose $a_{1}$. By symmetry, there are also finitely many ways to choose $a_{2},\ldots,a_{\ell}$. Let $N$ be a positive integer such that there are less than $N$ ways to choose each of $a_{1},\ldots,a_{\ell}$. As a result, $q$ has at most $N^{\ell}$ factorizations of length $\ell$. Hence we conclude that $E(M)$ is an FFS.
\end{proof}
We now define another method to construct positive semirings, the monoid semiring of $M$ over the naturals. As seen from the definition, this construction is quite similar to the $E(M)$ construction.
\begin{definition}
    Let $M$ be a positive monoid consisting of algebraic numbers. Then, the \textit{monoid semiring of $M$ over the naturals}, denoted by $\mathbb{N}_0[M]$, is the semiring of polynomials with coefficients in $\mathbb{N}_0$ and exponents in $M$, i.e.
    \[\mathbb{N}_0[M] = \left\{\sum_{i=1}^k c_ix^{m_i}\mid c_i, k\in \mathbb{N}, m_i\in M \right\}.\]
\end{definition}
It turns out that for any positive monoid consisting of algebraic numbers, the semirings $E(M)$ and $\mathbb{N}_0[M]$ are isomorphic.
\begin{proposition} \label{prop:monoid semiring and E(M) are isomorphic}
    Let $M$ be a positive monoid consisting of algebraic numbers. Then, the semirings $E(M)$ and $\mathbb{N}_0[M]$ are isomorphic.
\end{proposition}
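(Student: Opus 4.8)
The plan is to show that the obvious evaluation map is the desired isomorphism. Define $\varphi \colon \mathbb{N}_0[M] \to E(M)$ by setting $\varphi(x^m) = e^m$ for each $m \in M$ and extending additively, so that $\varphi\big(\sum_{i=1}^k c_i x^{m_i}\big) = \sum_{i=1}^k c_i e^{m_i}$ (with $\varphi(0) = 0$). First I would verify that $\varphi$ is a semiring homomorphism. Additivity holds by construction, so it remains to check compatibility with multiplication, and for this it suffices to work on monomials: since $\varphi(x^m x^n) = \varphi(x^{m+n}) = e^{m+n} = e^m e^n = \varphi(x^m)\varphi(x^n)$, distributivity in both semirings propagates the identity to arbitrary polynomials. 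Finally $\varphi(x^0) = e^0 = 1$, so the multiplicative identity is preserved and $\varphi$ is a homomorphism of semirings.

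Next I would establish surjectivity, which is essentially immediate from the definition $E(M) = \langle e^m \mid m \in M \rangle$: every element of $E(M)$ is a finite $\mathbb{N}_0$-linear combination $\sum_i c_i e^{m_i}$ of elements of $e(M) = \{e^m \mid m \in M\}$, and such an element is exactly the image $\varphi\big(\sum_i c_i x^{m_i}\big)$.

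The crux of the argument is injectivity, and this is precisely where the hypothesis that $M$ consists of algebraic numbers enters. As recorded earlier in this section, the Lindemann--Weierstrass Theorem guarantees that $e(M)$ is linearly independent over the algebraic numbers, whence the additive monoid $E(M)$ is a free commutative monoid on $e(M)$; in particular, every nonzero element of $E(M)$ has a unique expression as $\sum_i c_i e^{m_i}$ with distinct exponents $m_i \in M$ and positive coefficients $c_i$. Consequently, if $\varphi\big(\sum_i c_i x^{m_i}\big) = \varphi\big(\sum_j d_j x^{n_j}\big)$ with distinct exponents within each sum, then the two representations of the same element of $E(M)$ must coincide term by term, forcing the two polynomials to be identical in $\mathbb{N}_0[M]$. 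Hence $\varphi$ is injective.

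I expect the injectivity step to be the only genuine obstacle: it rests entirely on the linear independence of $e(M)$ over $\mathbb{Q}$ supplied by Lindemann--Weierstrass, and once the uniqueness of representation in $E(M)$ is in hand, injectivity reduces to matching coefficients of equal exponents. Combining this with the homomorphism and surjectivity properties established above shows that $\varphi$ is the required semiring isomorphism.
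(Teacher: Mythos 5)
Your proof is correct and follows essentially the same route as the paper: both define the coefficient-matching map $\sum_i c_i x^{m_i} \mapsto \sum_i c_i e^{m_i}$ and invoke the Lindemann--Weierstrass Theorem to get linear independence of $e(M)$, which is exactly what makes the map injective. If anything, your write-up is more complete than the paper's, since you explicitly verify multiplicativity of $\varphi$ and surjectivity, steps the paper leaves implicit in the phrase ``we get an isomorphism.''
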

\begin{proof}
    Since $M$ consists of algebraic numbers, the set $\{x^{m}\mid m\in M\}$ is linearly independent over the natural numbers. Suppose
    \[
        c_1 x^{m_1} + \dots + c_k x^{m_k} = 0.
    \]
    where $c_1,\ldots, c_k$ are nonzero and $k\ge 1$, and $m_1,\ldots, m_k$ are pairwise distinct.
    Then
    \[
        c_1 e^{m_1}+\dots + c_ke^{m_k}=0,
    \]
    so $e^{m_1},\ldots, e^{m_k}$ are linearly dependent, but this contradicts the Lindemann-Weierstrass Theorem.
    
    Thus, every element of $\mathbb{N}_0[M]$ can be represented uniquely as $s = c_1 x^{m_1} + \dots + c_k x^{m_k}$ for some coefficients $c_1, \dots, c_k \in \mathbb{N}$ and exponents $m_1, \dots, m_k \in M$ such that $m_1 < \dots < m_k$. Then, by defining $\varphi(s) = c_1 e^{m_1} + \dots + c_k e^{m_k}$, we get an isomorphism from $\mathbb{N}_0[M]$ to $E(M)$.
\end{proof}
Proposition~\ref{prop:monoid semiring and E(M) are isomorphic} shows that the results of Proposition~\ref{prop:ascending properties WRT exponentiation} also hold true for $\mathbb{N}_0[M]$.
We can use Proposition~\ref{prop:ascending properties WRT exponentiation} to construct positive semirings satisfying some desired prescribed properties that are easier to achieve with positive monoids. The following example illustrates this observation.

\begin{example} \label{ex:BFS not LFFS}
	Consider the positive monoid $M := \{0\} \cup \mathbb{Q}_{\ge 1}$ (which is also a positive semiring). We have seen in Example~\ref{ex:BFM not LFFM} that $M$ is a BFM with $\mathcal{A}(M) = \mathbb{Q} \cap [1,2)$ and also that $M$ is not an LFFM. Since $M$ is a positive monoid consisting of algebraic numbers, it follows from Proposition~\ref{prop:ascending properties WRT exponentiation} that the exponentiation semiring $E(M)$ is also a BFS. However, also from the fact that $M$ consists of algebraic numbers, we obtain via \cite[Lemma~2.7]{BCG21} that $e^M$ is a divisor-closed submonoid of $E(M)$. Since $e^M$ is isomorphic to $M$, the fact that $M$ is not an LFFM guarantees that $E(M)$ is not an LFFS. Hence $E(M)$ is a positive semiring that is a BFS but not an LFFS.
\end{example}

We have seen in Proposition~\ref{prop:ascending properties WRT exponentiation} that the properties of satisfying the ACCP, having bounded factorizations, and having finite factorizations all ascend from a positive monoid $M$ to its exponentiation semiring $E(M)$ provided that $M$ consists of algebraic elements. We still do not know, however, whether this is also the case for the length-finite factorization property.
\begin{question} \label{q:does LFF ascend to E(M)}
    For a positive monoid $M$ consisting of algebraic numbers, is $E(M)$ an LFFS provided that $M$ is an LFFM? 
\end{question}

\subsection{Further Concrete Examples}

In this final subsection, we provide two further classes of positive semirings. The first class consists of positive semirings that generalize the positive semiring $M$ used in Example~\ref{ex:BFS not LFFS}: this class contains positive semirings that, as the positive semiring $E(M)$ in Example~\ref{ex:BFS not LFFS}, are BFS but not LFFS. The second class consists of positive semirings that are FFS and, therefore, both BFS and LFFS.

\begin{example} \label{ex:conductuve semirings}
	For $r \in \mathbb{R}_{\ge 1}$, consider the positive monoid $S_r := \mathbb{N}_0 \cup \mathbb{R}_{\ge r}$. Because $S_r$ contains $1$ and is closed under multiplication, it is a positive semiring. If $r = 1$, then for each $s \in \mathbb{R}_{> 1}$ the equality $s = \big( s \frac{n}{n+1}\big) \big( \frac{n+1}{n}\big)$ for $n \in \mathbb{N}$ large enough implies that $s$ is not an atom of the multiplicative monoid $S_r^\bullet$ and, therefore, we conclude that $S_r$ is antimatter. For $r > 1$, it follows from \cite[Theorem~5.1]{BCG21} that
	\[
		\mathcal{A}_+(S_r) = \big( \{1\} \cup [r,r+1) \big) \setminus \{ \lceil r \rceil \} \quad \text{ and } \quad \mathcal{A}_\times(S_r) = \big( \mathbb{P}_{< r^2} \cup [r,r^2) \big) \setminus \mathbb{P} \cdot (S_r)_{> 1},
	\]
	and also that both the additive and the multiplicative monoids of $S_r$ are BFMs. Even when $r > 1$, neither the additive nor the multiplicative monoid of $S_r$ may be FFMs (the case $r=2$ was illustrated in \cite[Example~6.4]{BCG21}). We proceed to argue that $S_r$ is not even an LFFS when $r > 1$. To verify that the additive monoid of $S_r$ is not an LFFM, it suffices to observe that the formal sum $(r+\frac{1}{n})+(r+1-\frac{1}{n})$ is a length-2 factorization of $2r+1$ in $(S_r,+)$ for every $n \in \mathbb{N}$ sufficiently large. In a similar way, we can argue that the multiplicative monoid $(S_r^\bullet,\cdot)$ is not an LFFM since the formal product $(s\frac{n+1}{n})(s\frac{n}{n+1})$ for some $s\in (r,r^2)$ is a length-2 factorization of $s^2$ for every sufficiently large $n \in \mathbb{N}$.
\end{example}

The positive monoid in Example~\ref{ex:LFFM not ACCP}(b) is also a positive semiring, and the more general case corresponding to algebraic parameters was studied in \cite{CG22} and, more recently, in~\cite{ABLST23}. However, only the additive structure of these semirings has been systematically investigated.

\begin{example} \label{ex:valuation semirings}
	Let $\alpha \in \mathbb{R}_{> 0}$, and consider the positive monoid $\mathbb{N}_0[\alpha] := \langle \alpha^n \mid n \in \mathbb{N}_0 \rangle$ (the special case when $\alpha$ is rational was briefly considered in Example~\ref{ex:LFFM not ACCP}(b)). Observe that $1 \in \mathbb{N}_0[\alpha]$ and also that $\mathbb{N}_0[\alpha]$ is closed under multiplication. Hence $\mathbb{N}_0[\alpha]$ is a positive semiring. There are choices of $\alpha$ such that the additive monoid $\mathbb{N}_0[\alpha]$ is antimatter (for instance, $\alpha = \frac12$) and there are choices of $\alpha$ such that the additive monoid $\mathbb{N}_0[\alpha]$ is atomic but does not satisfy the ACCP (for instance, $\alpha = \frac23$). In addition,  it follows from \cite[Theorem~4.11]{CG22} that the positive monoid $\mathbb{N}_0[\alpha]$ satisfies the ACCP if and only if it is a BFM if and only if it is an FFM. However, not too much is known about the atomic structure of the multiplicative monoid of $\mathbb{N}_0[\alpha]$.
\end{example}

In connection with Example~\ref{ex:valuation semirings}, we were not able to settle the following question.

\begin{question} \label{q:when does LFF property ascend to N_0[q]}
	For which $q \in \mathbb{Q}_{> 0}$ is the positive semiring $\mathbb{N}_0[q]$ an LFFS?
\end{question}

\section*{Acknowledgments}

We are grateful to our mentors, Prof.\ Jim Coykendall and Dr.\ Felix Gotti, for proposing this project and for their guidance during the preparation of this project. While working on this paper, we were part of PRIMES-USA, a year-long math research program hosted by MIT. We would like to express our collective gratitude to the MIT PRIMES program for arranging such an engaging research experience.

\end{document}